\newcommand{\CM}{Cohen-Macaulay}
\newcommand{\n}{\mathfrak{n} }
\newcommand{\m}{\mathfrak{m} }
\newcommand{\M}{\mathfrak{M} }
\newcommand{\R}{\mathcal{R} }
\newcommand{\T}{\mathcal{T} }
\newcommand{\Lc}{\mathcal{L} }
\newcommand{\Sc}{\mathcal{S} }
\newcommand{\Fc}{\mathcal{F} }
\newcommand{\Z}{\mathbb{Z} }
\newcommand{\rt}{\rightarrow}
\newcommand{\ov}{\overline}
\newcommand{\grade}{\operatorname{grade}}
\theoremstyle{plain}
\newtheorem{theorem}{Theorem}[section]
\newtheorem{lemma}[theorem]{Lemma}
\newtheorem{proposition}[theorem]{Proposition}
\theoremstyle{definition}
\theoremstyle{remark}
\begin{document}

\title[local cohomology]{Integral closure and local cohomology}
\author{Tony~J.~Puthenpurakal}
\date{\today}
\address{Department of Mathematics, IIT Bombay, Powai, Mumbai 400 076}
\email{tputhen@math.iitb.ac.in}
\subjclass{Primary  13A30, 13D45 ; Secondary 13D40}
\keywords{integral closure, Rees algebras, local cohomology}

 \begin{abstract}
Let $A$ be a Noetherian ring and let $I$ be an ideal in $A$. Let $\Fc = \{ J_n \}_{n \geq 0}$ be a multiplicative filtration of ideals in $A$ such that $\R(\Fc) = \bigoplus_{n \geq 0} J_n$ is a finitely generated $A$-algebra. Let $\R = A[It]$ and assume  $I^n \subseteq J_n$ for all $n \geq 1$. We show the following two assertions are equivalent:
\begin{enumerate}
  \item For all $i \geq 0$ we have $H^i_{\R_+}(\R(\Fc))_n = 0$ for all $n \gg 0$.
  \item $J_n \subseteq \ov{I^n}$ for all $n \geq 1$.
\end{enumerate}
Here $\ov{I^n}$ is the integral closure of $I^n$.
\end{abstract}
 \maketitle
\section{introduction}
Let $A$ be a Noetherian ring.  If $K$ is an ideal of $A$ then $\ov{K}$ denotes the integral closure of $K$.
In this paper we relate local cohomology and integral closure. We prove the following result:
\begin{theorem}\label{main}
Let $A$ be a Noetherian ring and let $I$ be an ideal in $A$. Let $\Fc = \{ J_n \}_{n \geq 0}$ be a multiplicative filtration of ideals in $A$ such that $\R(\Fc) = \bigoplus_{n \geq 0} J_n$ is a finitely generated $A$-algebra. Let $\R = A[It]$ and assume  $I^n \subseteq J_n$ for all $n \geq 1$. The following two assertions are equivalent:
\begin{enumerate}[\rm (1)]
  \item For all $i \geq 0$ we have $H^i_{\R_+}(\R(\Fc))_n = 0$ for all $n \gg 0$.
  \item $J_n \subseteq \ov{I^n}$ for all $n \geq 1$.
\end{enumerate}
\end{theorem}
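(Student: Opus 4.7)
The approach is to introduce the intermediate condition
$$(3) \quad \R(\Fc) \text{ is finitely generated as an } \R\text{-module,}$$
and to prove $(2)\iff(3)\iff(1)$. Since $I^n\subseteq J_n\subseteq A$, both $\R=A[It]$ and $\R(\Fc)=\bigoplus_n J_n t^n$ embed naturally as graded subrings of the polynomial ring $A[t]$, with $\R\subseteq\R(\Fc)$.

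The equivalence $(2)\iff(3)$ is essentially classical. An element $xt^n\in A[t]$ is integral over $\R=A[It]$ iff $x$ satisfies a monic equation $x^k+c_1x^{k-1}+\cdots+c_k=0$ with $c_j\in I^{nj}$, which is the definition of $x\in\ov{I^n}$; hence the integral closure of $\R$ in $A[t]$ is precisely $\bigoplus_n\ov{I^n}t^n$. So (2) says that $\R(\Fc)$ lies in this integral closure, i.e., $\R(\Fc)$ is integral over $\R$. Since $\R(\Fc)$ is finitely generated as an $A$-algebra, hence as an $\R$-algebra, integrality over $\R$ is equivalent to $\R$-module finiteness. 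The direction $(3)\Rightarrow(1)$ is then the standard asymptotic vanishing of graded local cohomology for finitely generated graded modules over finitely generated Noetherian graded $A$-algebras (cf.\ Brodmann--Sharp).

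The hard direction is $(1)\Rightarrow(3)$, and this is where I expect the main obstacle. Pick homogeneous $A$-algebra generators $y_1,\dots,y_s$ of $\R(\Fc)$ with $y_i\in J_{n_i}$, and form the ascending chain of finitely generated $\R$-submodules $N_k=\sum_{|\alpha|\le k}\R\cdot y^{\alpha}\subseteq\R(\Fc)$ indexed by multi-indices $\alpha$; stabilization of this chain is equivalent to each $y_i$ being integral over $\R$, i.e., to (3). The naive long exact sequence on $0\to N_k\to\R(\Fc)\to Q_k\to 0$, combined with (1) and the vanishing of $H^i_{\R_+}(N_k)_n$ for $n\gg 0$, yields only that $H^i_{\R_+}(Q_k)_n=0$ for $n\gg 0$ for all $i$, but this cohomological vanishing does not by itself force $(Q_k)_n=0$. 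I expect the correct route to pass through the geometry of the blow-up $X=\Proj\R$, which is proper over $\Spec A$: condition (1) translates into Serre vanishing on $X$ together with the asymptotic agreement $\R(\Fc)_n\simeq H^0(X,\widetilde{\R(\Fc)}(n))$ for $n\gg 0$, and combined with the finite generation of $\R(\Fc)$ as an $A$-algebra this should force the quasi-coherent sheaf $\widetilde{\R(\Fc)}$ to be coherent on $X$. Coherence of $\widetilde{\R(\Fc)}$ is in turn equivalent to $\R(\Fc)$ being finite over $\R$ up to a finite-length summand in low degrees, which gives (3).
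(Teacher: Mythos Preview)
Your reformulation via the intermediate condition (3) is correct, and the equivalences $(2)\Longleftrightarrow(3)$ and $(3)\Rightarrow(1)$ are standard. You have also correctly located the hard direction and correctly observed that the naive filtration argument with the $N_k$ cannot work on its own.

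The gap is in the geometric sketch of $(1)\Rightarrow(3)$. The assertion that Serre vanishing for $\widetilde{\R(\Fc)}$ on $X=\Proj\R$, together with the asymptotic isomorphism $\R(\Fc)_n\cong H^0(X,\widetilde{\R(\Fc)}(n))$ and the finite generation of $\R(\Fc)$ as an $A$-algebra, forces $\widetilde{\R(\Fc)}$ to be coherent, is not justified; it is essentially a geometric restatement of what you are trying to prove. Concretely, $\widetilde{\R(\Fc)}$ is a finite-type $\mathcal{O}_X$-algebra (its sections over $D_+(ft)$ are $\bigcup_n J_n/f^n\subseteq A_f$), so one may form the affine finite-type morphism $Y=\Spec_X\widetilde{\R(\Fc)}\to X$; coherence is exactly the statement that this map is finite. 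Serre vanishing is a \emph{consequence} of finiteness, but there is no general mechanism running it backwards for quasi-coherent sheaves. Your own filtration already shows why: local cohomology commutes with direct limits, so $H^i_{\R_+}(\varinjlim N_k)=\varinjlim H^i_{\R_+}(N_k)$ vanishes in high degree automatically, whether or not the chain stabilizes. The ring structure of $\R(\Fc)$ must enter in an essential way, and you have not said how. (The extreme case $J_n=A$, i.e.\ $\R(\Fc)=A[t]$, already shows the delicacy: here coherence fails, and one has to \emph{prove} that condition~(1) fails too, which is not formal.)

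The paper's route is quite different and avoids any direct coherence argument. A Veronese first reduces to the standard-graded case $\R(\Fc)=A[Jt]$ for a single ideal $J\supseteq I$. Assuming $J\not\subseteq\ov{I}$, one may replace $I$ by $J\cap\ov{I}$ (same radical in $\Sc$, hence same local cohomology) to arrange $\ov{I}\cap J=I$, and then localize at a minimal prime $P$ of $J/I$, landing in a local ring $B=A_P$ with $\ell(J_P/I_P)<\infty$. Two cases remain. If $J_P=B$ (so $I_P$ is $PB$-primary and $\Sc_P=B[t]$), a separate lemma, proved by induction on $\dim B$, shows that $H^i_{\R_+}(B[t])$ cannot vanish asymptotically for all $i$. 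If $J_P\subsetneq B$, a local ``ingredient'' theorem, again proved by induction on dimension using filter-regular elements and, in dimension one, Rees's multiplicity theorem, gives $J_P\subseteq\ov{I_P}$, contradicting the choice of $P$. Thus the argument is reduction-to-local plus two inductive lemmas, rather than an abstract coherence criterion on the blow-up.
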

Note the assertion (2) $\implies$ (1) is trivial and is left to the reader. The real content of the result is (1) $\implies$ (2).

The essential ingredient is the following result:
\begin{theorem}\label{ingredient}
Let $(A,\m)$ be a Noetherian local ring and $I \subseteq J \subseteq \m$ with $\ell(J/I) < \infty$. Set $\R = A[It]$ and $\Sc = A[Jt]$. Suppose for all $i \geq 0$ we have $H^i_{\R_+}(\Sc)_n = 0$ for all $n \gg 0$. Then $J \subseteq \ov{I}$.
\end{theorem}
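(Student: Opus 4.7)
The plan is to show $\Sc$ is finitely generated as an $\R$-module, since this is classically equivalent to $\Sc$ being integral over $\R$, hence to $J \subseteq \ov{I}$; equivalently, I will produce $n$ with $J^{n+1} = IJ^n$.

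I start by forming the short exact sequence of graded $\R$-modules
\[ 0 \to \R \to \Sc \to M \to 0, \qquad M := \Sc/\R. \]
Standard Serre--Grothendieck vanishing gives $H^i_{\R_+}(\R)_n = 0$ for $n \gg 0$, so the hypothesis on $\Sc$ combined with the long exact sequence in local cohomology yields $H^i_{\R_+}(M)_n = 0$ for $n \gg 0$ and all $i$. The assumption $\ell(J/I) < \infty$ with $J \subseteq \m$ forces $\sqrt I = \sqrt J$: for any prime $P \neq \m$ containing $I$, $(J/I)_P = 0$ gives $J_P = I_P \subseteq P_P$, whence $J \subseteq P$. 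Hence each $M_n = J^n/I^n$ is supported only at $\m$, of finite length, and $M$ itself is $\m$-power torsion; in particular $H^q_{\m\R}(M) = 0$ for $q \geq 1$ and $H^0_{\m\R}(M) = M$. Then the Grothendieck spectral sequence
\[ E_2^{p,q} = H^p_{\R_+}\bigl(H^q_{\m\R}(M)\bigr) \Longrightarrow H^{p+q}_{\M}(M), \]
with $\M := \m\R + \R_+$ the graded maximal ideal of $\R$, collapses on the row $q = 0$ to give $H^i_{\M}(M) \cong H^i_{\R_+}(M)$; in particular $H^i_{\M}(M)_n = 0$ for $n \gg 0$ and all $i$.

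The crux is then to deduce from (i) each $M_n$ having finite length and (ii) $H^i_{\M}(M)_n = 0$ in large degree, for all $i$, that $M$ is finitely generated as an $\R$-module. For this I use graded Matlis / local duality: replacing $A$ by its $\m$-adic completion $\widehat A$ (harmless by flatness for the hypothesis, and by commutation of integral closure with faithfully flat base change for the conclusion), I form the graded Matlis dual $M^{\vee} = \bigoplus_n \Hom_{\widehat A}(M_n, E_{\widehat A}(k))$ and use the graded form of local duality to translate the cohomological vanishing on $M$ into an $\Ext$-vanishing statement for $M^{\vee}$ in low negative degrees. A graded Nakayama argument then bounds the range of degrees in which $M^{\vee}$ is supported, and dualizing back produces a finite graded generating set for $M$ over $\R$. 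The original exact sequence then shows $\Sc$ is finitely generated over $\R$, giving $J \subseteq \ov{I}$.

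The main obstacle is this last deduction: the converse of Serre--Grothendieck vanishing fails for arbitrary graded modules, and the finite length of each $M_n$ is essential for the duality-based argument. The preceding reductions are formal consequences of the setup.
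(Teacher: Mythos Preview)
Your reduction is clean and correct up through the point where you have $M = \Sc/\R = \bigoplus_{n\geq 0} J^n/I^n$ with each $M_n$ of finite length over $A$ and $H^i_{\M}(M)_n = H^i_{\R_+}(M)_n = 0$ for all $i$ and all $n \gg 0$. The problem is the final step, where you invoke graded local duality to conclude that $M$ is finitely generated. Graded local duality in the form $H^i_{\M}(M)^{\vee} \cong \Ext^{d-i}_{\R}(M,\omega_{\R})$ is a statement about \emph{finitely generated} graded modules; applying it to $M$ presupposes exactly what you are trying to prove. Your alternative phrasing, that the cohomological vanishing becomes an ``$\Ext$-vanishing statement for $M^{\vee}$'', does not match any form of local duality I know (the $\Ext$ side involves $M$, not $M^{\vee}$), and the subsequent sentence about Nakayama ``bounding the range of degrees in which $M^{\vee}$ is supported'' cannot be what you want either: that would force $M_n = 0$ for $n\gg 0$, which is strictly stronger than finite generation and is false in general (take $I\subsetneq J$ $\m$-primary with $I$ a reduction of $J$). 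In short, the crux step is asserted but not proved, and it is essentially equivalent in difficulty to the theorem itself after your reductions.

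The paper proceeds quite differently. After reducing to an uncountable residue field and to $\grade I > 0$, it argues by induction on $\dim A$. In dimension one (where $A$ is then Cohen--Macaulay and $I$ is $\m$-primary) it chooses $x\in I$ with $xt$ filter-regular on $\Lc^I\oplus\Lc^J$ and shows, by analysing the exact sequence $0\to W\to \Lc^I\to\Lc^J\to 0$ with $W_n = J^{n+1}/I^{n+1}$ modulo $xt$, that $\ell(J^n/I^n)$ is eventually constant; hence $e_0(I)=e_0(J)$ and Rees's multiplicity theorem gives $J\subseteq\ov{I}$. For $\dim A\geq 2$ the same filter-regular $xt$ is used to pass to $A/(x)$, checking that the relevant kernels are $\R_+$-torsion so the cohomological hypothesis descends, and then the induction hypothesis applies. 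None of this uses duality; the honest work is concentrated in the one-dimensional base case and the Rees theorem. If you want to salvage your approach you would need an argument, valid for not-necessarily-finitely-generated graded modules with finite-length components, that high-degree vanishing of all $H^i_{\R_+}$ forces finite generation; I do not know a way to do this that is shorter than the paper's inductive argument.
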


The proof of Theorem \ref{main} follows from Theorem \ref{ingredient} by taking an appropriate Veronese of $\R(\Fc)$ and an appropriate localization. However we need to ensure that $JA_P$ does not blowup to $A_P$ while $IA_P$ remains $P$-primary. So we need the following result:
\begin{lemma}\label{blow-up}
Let $(A,\m)$ be a Noetherian local ring of dimension $d \geq 0$ and $I$ an  $\m$-primary ideal. Set $\R = A[It]$ and $\Sc = A[t]$.  Then there exists $i \geq 0$ such that $H^i_{\R_+}(\Sc)_n \neq 0$ for infinitely many $n  \geq 0$.
\end{lemma}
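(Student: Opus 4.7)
My plan is to compute $H^i_{\R_+}(\Sc)$ directly from the \v{C}ech complex and identify its nonnegative graded strands with ordinary local cohomology modules of $A$. Fix generators $a_1,\dots,a_s$ of $I$. Then $\R_+=(a_1t,\dots,a_st)\R$, so $H^i_{\R_+}(\Sc)$ is the $i$-th cohomology of the graded \v{C}ech complex $\check{C}^\bullet(a_1t,\dots,a_st;\Sc)$.

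The key computation is the graded form of the localizations. From the identity $(a_{i_1}t)\cdots(a_{i_k}t)=a_{i_1}\cdots a_{i_k}\,t^k$, inverting the left-hand side simultaneously inverts $a_{i_1}\cdots a_{i_k}$ and $t^k$, and hence $t$ itself. Therefore
\[
\Sc_{a_{i_1}t\cdots a_{i_k}t}\;=\;A_{a_{i_1}\cdots a_{i_k}}[t,t^{-1}]
\]
as $\Z$-graded rings, whose degree-$n$ part is the free $A_{a_{i_1}\cdots a_{i_k}}$-module on $t^n$. Since $(\Sc)_n=A\,t^n$ for $n\ge 0$, the degree-$n$ strand of the \v{C}ech complex, after the identification $c\,t^n\leftrightarrow c$, reads
\[
0\to A\to\bigoplus_i A_{a_i}\to\bigoplus_{i<j} A_{a_ia_j}\to\cdots\to A_{a_1\cdots a_s}\to 0,
\]
with the ordinary localization maps as differentials. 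This is precisely the \v{C}ech complex of $A$ on $a_1,\dots,a_s$, whose $i$-th cohomology is $H^i_I(A)=H^i_\m(A)$, the last equality because $I$ is $\m$-primary.

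Consequently $H^i_{\R_+}(\Sc)_n\cong H^i_\m(A)$ for every $i\ge 0$ and every $n\ge 0$. Taking $i=d=\dim A$, Grothendieck's nonvanishing theorem gives $H^d_\m(A)\ne 0$ (trivial when $d=0$, since then $H^0_\m(A)=A\ne 0$), and hence
\[
H^d_{\R_+}(\Sc)_n\;\ne\;0\qquad\text{for \emph{every} }n\ge 0,
\]
which is stronger than required. The argument presents no real obstacle; the only point needing care is the grading bookkeeping in identifying the localized rings $\Sc_{a_{i_1}t\cdots a_{i_k}t}$ and in reading off the degree-$n$ piece of the \v{C}ech differentials.
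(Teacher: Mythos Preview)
Your argument is correct and in fact proves more than the lemma asks: you obtain $H^i_{\R_+}(\Sc)_n\cong H^i_\m(A)$ for every $n\ge 0$, so in particular $H^d_{\R_+}(\Sc)_n\neq 0$ for \emph{all} $n\ge 0$ by Grothendieck nonvanishing. The only thing to remark is that the \v{C}ech identification of local cohomology is valid for arbitrary (not necessarily finitely generated) modules over a Noetherian ring, which is what is being used here since $\Sc$ is not a finite $\R$-module; this is standard.

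Your route is genuinely different from the paper's. The paper proceeds by induction on $d$: the case $d=0$ is handled by noting $\R_+$ is nilpotent; for $d\ge 1$ one first shows that vanishing of $H^0_{\R_+}(\Sc)$ in high degrees forces $\grade I>0$, then in dimension one reduces to a delicate analysis of $H^1_\M$ of the auxiliary module $\Lc^I$ and the associated graded ring $G_I(A)$, and for $d\ge 2$ one mods out a regular element $x\in I$ and applies the inductive hypothesis to $A/(x)$. Your direct \v{C}ech computation bypasses all of this structure, is uniform in $d$, and identifies the precise cohomological index $i=d$ that works; the paper's approach, by contrast, is the same reduction-mod-regular-element machinery used elsewhere in the paper, so it has the virtue of methodological coherence but is considerably longer for this particular statement.
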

Here is an overview of the contents of this paper. In section two we discuss a few preliminary results that we need. In section three we prove Theorem \ref{ingredient}. In section four we prove
Lemma \ref{blow-up}. Finally in section five we prove Theorem \ref{main}.
\section{Preliminaries}
In this section we discuss a few preliminary results that we need. In this paper all rings considered are Noetherian.

Let $(A,\m)$ be a local ring of dimension $d$ and let $I$ be an ideal in $A$. Let $\R = A[It]$.  Let $J \supseteq I$ be another ideal and set $\Sc = A[Jt]$. Note $\Sc$ is an $\R$-module.
\s\label{filt} Let $M = \bigoplus_{n \geq 0}M_n$ be an $\R$-module. We say $M$ is a \emph{quasi-finite} $\R$-module if
$M_n $ is a finitely generated $A$-module for all $n \geq 0$ and $H^0_{\R_+}(M)_n = 0$ for $n \gg 0$. If the residue field of $A$ is uncountable and $M$ is a quasi-finite $\R$-module then there
exists $xt \in \R_1$ which is $M$-filter regular, i.e., $(0\colon_M xt)_n = 0$ for $n \gg 0$, see \cite[2.7]{HPV}.

\s Set $\Lc^J = \bigoplus_{n \geq 0}A/J^{n+1}$. Note $\Sc$ is a $\R$-submodule of $A[t]$ and $A[t]/\Sc = \Lc^J(-1)$. Thus $\Lc^J$ is an $\R$-module.
This module was introduced in \cite{P1}.

\s Let $G_J(A) = \bigoplus_{n\geq 0}J^n/J^{n+1}$ be the associated graded ring of $J$. Then $G_J(A)$ is a quotient ring of $\Sc$ and so an $\R$-module.
The exact sequence
$$ 0 \rt \frac{J^n}{J^{n+1}} \rt \frac{A}{J^{n+1}} \rt \frac{A}{J^n} \rt 0 \quad \text{for $n \geq 0$}, $$
induces an exact sequence of $\R$-modules
$$ 0 \rt G_J(A) \rt \Lc^J \rt \Lc^J(-1) \rt 0.$$

\s Assume $I$ is $\m$-primary. The multiplicity of $I$ is defined as $$e_0(I) = \lim_{n \rt \infty} \frac{d!}{n^d}\ell(A/I^n).$$
\section{Proof of Theorem \ref{ingredient}}
In this section we give:
\begin{proof}[Proof of Theorem \ref{ingredient}]
We make several reductions to prove the result.

(1) It suffices to assume that the residue field of $A$ is uncountable.\\
If the residue field $k$ of $A$ is finite or countably infinite then we go to the flat extension $B = A[[X]]_{\m A[[X]]}$. The residue field of $B$ is $k((X))$ which is uncountable.
Set $\R^\prime = \R\otimes_A B = B[IBt]$ and $\Sc^\prime = \Sc \otimes_A B = B[JBt]$. Then note that
$\R^\prime $ is a flat extension of $\R$.  We also have $\Sc^\prime = \Sc \otimes_\R \R^\prime$. It follows from \cite[4.3.2]{bs} that for all $i \geq 0$
$$ H^i_{\R^\prime_+}(\Sc^\prime) = H^i_{\R_+}(\Sc)\otimes_{\R}\R^\prime = H^i_{\R_+}(\Sc)\otimes_A B.$$
Therefore $ H^i_{\R^\prime_+}(\Sc^\prime)_n = 0$ for all $n \gg 0$ and for all $i \geq 0$.
As we are assuming that the result holds for $B$ we get  $JB \subseteq \ov{IB}$. As $B$ is faithfully flat over $A$ we have
$$J = JB \cap A \subseteq \ov{IB} \cap A = \ov{I}.$$
(for the first equality see  \cite[7.5]{M} and for the second see \cite[1.6.2]{HS}).

(2) We assert that $J^n \cap H^0_I(A) = 0$ for all $n \gg 0$.\\
It follows easily from the Artin-Rees lemma that $H^0_I(A) \cap I^n = 0$ for all $n \gg 0$ (say from $n \geq r$).
As $\ell(J/I) < \infty$ we have $\m^sJ \subseteq I$ for some $s \geq 1$. Then $J^{s+1} \subseteq I$. It follows that $J^{r(s+1)} \cap H^0_I(A) = 0$.

(3) We may assume that the analytic spread of $I$ is positive. \\
Suppose $I$ has analytic spread zero. Then $I$ is nilpotent. So $H^0_I(A) = A$. By (2) it follows that $J$ is also nilpotent. So trivially we have $J \subseteq \ov{I}$.

(4) We may assume that $\grade I > 0$. \\
By (3) we may assume that the analytic spread of $I$ is positive. Suppose $\grade I  =0$. Set $B = A/H^0_I(A)$, $\R^\prime = B[It]$ and $\Sc^\prime = B[Jt]$. Consider the natural exact sequence of $\R$-modules
$0 \rt K \rt \Sc \rt \Sc^\prime \rt 0$. Then $K_n = J^n \cap H^0_I(A) = 0$ for $n \gg 0$. Thus $K$ is $\R_+$-torsion. So $H^0_{\R_+}(K) = K$ and $H^i_{\R_+}(K) = 0$ for $i > 0$.
 It follows that we have an exact sequence
$$ 0 \rt K \rt H^0_{\R_+}(\Sc) \rt H^0_{\R_+}(\Sc^\prime) \rt 0 \quad \text{and} \quad H^i_{\R_+}(\Sc) = H^i_{\R_+}(\Sc^\prime) \ \text{for} \ i > 0.$$
It follows that $H^i_{\R_+}(\Sc^\prime)_n = 0$ for all $n \gg 0$ and for all $i \geq 0$.
By graded independence theorem of local cohomology \cite[4.2.1]{bs}  we have
$$H^i_{\R_+}(\Sc^\prime) \cong H^i_{\R^\prime_+}(\Sc^\prime) \quad \text{for all} \ i \geq 0. $$
So $H^i_{\R^\prime_+}(\Sc^\prime)_n = 0$ for all $n \gg 0$ and for all $i \geq 0$. As $\grade IB > 0$ and we are assuming the result holds when $\grade IB > 0$; we have
$JB \subseteq \ov{IB}$. By \cite[1.2.5]{HS}  we have that $IB$ is a reduction of $JB$. Say $J^{r+1}B = I J^r B$. By (2), say $J^n \cap H^0_I(A) = 0$ for $n \geq s.$
Let $n \geq \max \{ r, s \}$. We have
$$ J^{n+1} + H^0_I(B) = IJ^n + H^0_I(B). $$
Let $a \in J^{n+1}$. Then $a = b + c$ where $b\in JI^n$ and $c \in H^0_I(A)$. Then $c \in J^{n+1} \cap H^0_I(A) = 0$. So $J^{n+1} = IJ^n$. Thus $I$ is a reduction of $J$. Again by \cite[1.2.5]{HS}
we have $J \subseteq \ov{I}$.

\

After these preliminaries we prove the result by induction on $d$. By (1) we may assume the residue field of $A$ is uncountable. By (3) the result hold when $d = 0$.

We first prove the result when $d = 1$.
By (3) we may assume that $I$ has positive analytic spread. By (4) we may assume $\grade I > 0$. In particular $A$ is \CM  \ and $I$ is $\m$-primary.
We have an exact sequence of $\R$-modules $0 \rt \Sc \rt A[t] \rt \Lc^J(-1) \rt 0$. As $\grade I > 0$ we have $H^0_{\R_+}(A[t]) = 0$. We have an inclusion
$H^0_{\R_+}(\Lc^J) \subseteq H^1_{\R_+}(\Sc)$. So $H^0_{\R_+}(\Lc)_n = 0$ for $n \gg 0$. Thus $\Lc^J$ is a quasi-finite $\R$-module.  Also as again $\grade I > 0$ we have that $\Lc^I$ is
a quasi-finite $\R$-module.  As the residue field of $A$ is uncountable there exists $x \in I$ such that $xt$ is $\Lc^I \oplus \Lc^J$-filter regular, see \ref{filt}. Note $x$ is $I$-superficial. So $(x)$ is a reduction of $I$. Therefore $I^n \subseteq (x)$ for all $n \gg 0$. As $J^s \subseteq I$ for some $s$ it follows that $J^n \subseteq (x)$ for all $n \gg 0$. Set $W = \bigoplus_{n \geq 0} J^{n+1}/I^{n+1}$. We have an exact sequence $0 \rt W \rt \Lc^I \rt \Lc^J \rt 0.$ Going mod $xt$  and as $xt$ is $\Lc^I \oplus \Lc^J$-filter regular we have an exact sequence
$$ 0 \rt V \rt W/xt W \rt \Lc^I/xt \Lc^I \rt  \Lc^J/xt \Lc^J \rt 0 \quad \text{where $V$ has finite length.} $$
We note that
$$ (\Lc^J/xt \Lc^J)_n = A/(J^{n+1}, x) = A/(x) \quad \text{for all $n \gg 0$}.$$
Similarly $(\Lc^I/xt \Lc^I)_n = A/(x)$ for all $n \gg 0$. It follows that $(W/xt W)_n = 0$ for all $n \gg 0$. Note $xt$ is also $W$-filter regular. It follows that $\ell(J^n /I^n)$ is constant for all $n \gg 0$. So $e_0(I) = e_0(J)$. As $A$ is \CM \ it follows from Rees multiplicity theorem, see \cite{R} (also see \cite[11.3.1]{HS}). that $I$ is a reduction of $J$. So $J \subseteq \ov{I}$.

Now assume $d \geq 2$ and the result has been proved for all rings of dimension $\leq d - 1$. By (3) we may assume that $I$ has positive analytic spread. By (4) we may assume $\grade I > 0$.
We have an exact sequence of $\R$-modules $0 \rt \Sc \rt A[t] \rt \Lc^J(-1) \rt 0$. As $\grade I > 0$ we have $H^0_{\R_+}(A[t]) = 0$. We have an inclusion
$H^0_{\R_+}(\Lc^J) \subseteq H^1_{\R_+}(\Sc)$. So $H^0_{\R_+}(\Lc^J)_n = 0$ for $n \gg 0$. So $\Lc^J$ is a quasi-finite $\R$-module.  Also as  $\grade I > 0$ we have that $\Lc^I$ is
a quasi-finite $\R$-module.  As the residue field of $A$ is uncountable there exists $x \in I$ such that $xt$ is $\Lc^I \oplus \Lc^J$-filter regular, see \ref{filt}. Note $(J^{n+1} \colon x) = J^n$ for all $n \gg 0.$ Similarly $(I^{n+1} \colon x) = I^n$ for all $n \gg 0$. In particular $x$ is $A$-regular. Set $B = A/(x)$, $\R^\prime = B[It]$ and $\Sc^\prime = B[Jt]$. We have an exact sequence of $\R$-modules
$$ 0 \rt \Sc(-1) \xrightarrow{xt} \Sc \rt \ov{\Sc} \rt 0. $$
It follows that $H^i_{\R_+}(\ov{\Sc})_n = 0$ for all $n \gg 0$ and for all $i \geq 0$. We have an exact sequence
of $\R$-modules
$$0 \rt K \rt \ov{\Sc} \rt \Sc^\prime \rt 0 \quad \text{for $n \geq 1$ we have} \ K_n = \frac{J^n \cap  (x)}{xJ^{n-1}}. $$
Note $K_n = 0$ for $n \gg 0$. So $K$ is $\R$-torsion. It follows that $H^i_{\R_+}(\Sc^\prime)_n = 0$ for $n \gg 0$ and for all $i \geq 0$.
By graded independence theorem of local cohomology,  \cite[4,2,1]{bs}, we have
$$H^i_{\R_+}(\Sc^\prime) \cong H^i_{\R^\prime_+}(\Sc^\prime) \quad \text{for all} \ i \geq 0. $$
So $H^i_{\R^\prime_+}(\Sc^\prime)_n = 0$ for all $n \gg 0$ and for all $i \geq 0$.
As $\dim B = d -1$, by our induction hypotheses we have $JB \subseteq \ov{IB}$. So $IB$ is a reduction of $JB$. Thus there exists $m \geq 1$ such that $J^{m+1}B = IJ^mB$.
  We also have $(J^{n+1} \colon x) = J^n$ say for $n \geq r$. Let $n \geq \max\{ m, r+1 \}$.
Then $J^{n+1} + (x) = IJ^n + (x)$. Let $a \in J^{n+1}$. Then $a = b + x\theta$ where $b \in IJ^n$. Note $x\theta \in J^{n+1}$. So $\theta \in J^n$. It follows that $a \in JI^n$.
Thus $J^{n+1} = IJ^n$. So $I$ is a reduction of $J$. It follows that $J \subseteq \ov{I}$.
\end{proof}
\section{Proof of Lemma \ref{blow-up}}
In this section we give
a proof of Lemma \ref{blow-up}. The case when $\dim A = 0$ has to be proved separately.
\begin{proof}[Proof of Lemma \ref{blow-up} when $\dim A = 0$]
We note that as $I$ is nilpotent we get that $\R_+$ is nilpotent. So $ H^0_{\R_+}(\Sc) = \Sc$. The result follows.
\end{proof}
When $d \geq 1$ we need the following result.
\begin{proposition}\label{pos}(with hypotheses as in \ref{blow-up})
Assume $d \geq 1$. If $H^0_{\R_+}(\Sc)_n = 0$ for all $n \gg 0$ then $\grade I > 0$.
\end{proposition}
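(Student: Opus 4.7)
My plan is to identify $H^0_{\R_+}(\Sc)$ in every graded degree and then translate the hypothesis into a purely ring-theoretic statement about $A$.

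First, I would unpack the $\R$-module structure on $\Sc = A[t]$: one has $\Sc_n = At^n \cong A$, with $\R_k = I^kt^k$ acting as multiplication by $I^k$. The key point is that for $m \geq k$ the graded piece $(\R_+^k)_m$ equals $I^mt^m$, because $I^mt^m = (It)^k \cdot (It)^{m-k}$ already lies in the $k$-th power of $\R_+$. This reduces the condition $\R_+^k \cdot at^n = 0$ to the condition $I^k a = 0$, yielding
\[
H^0_{\R_+}(\Sc)_n \;=\; H^0_I(A)\cdot t^n \qquad \text{for every } n \geq 0.
\]

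Next, since $I$ is $\m$-primary, $\sqrt{I} = \m$, so $H^0_I(A) = H^0_\m(A)$. The hypothesis that $H^0_{\R_+}(\Sc)_n$ vanishes for $n \gg 0$, combined with the degree-independent formula above, forces $H^0_\m(A) = 0$. Equivalently, $\m$ contains an $A$-regular element, so $\depth A \geq 1$. Because $I$ is $\m$-primary, $\grade I = \grade \m = \depth A \geq 1$, as required.

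I do not anticipate any genuine obstacle here: the whole proof is the identification $H^0_{\R_+}(\Sc)_n = H^0_\m(A)\cdot t^n$, which is immediate from the definitions together with the $\m$-primary hypothesis on $I$. The role of the assumption $d \geq 1$ is only to exclude the vacuous case $d = 0$, in which $I$ is nilpotent, $H^0_I(A) = A \neq 0$, and the hypothesis can never hold (consistent with the separate treatment of $d = 0$ given earlier for Lemma~\ref{blow-up}).
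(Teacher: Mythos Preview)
Your proof is correct and follows essentially the same idea as the paper's. The paper argues by contrapositive: if $\grade I = 0$ then $U = H^0_I(A) \neq 0$, and since $I^kU = 0$ for some $k$ one has $Ut^n \subseteq H^0_{\R_+}(\Sc)_n$ for all $n$, contradicting the hypothesis. You prove the sharper equality $H^0_{\R_+}(\Sc)_n = H^0_I(A)\,t^n$ and then invoke the $\m$-primary hypothesis to rewrite this as $H^0_\m(A)$, but the core observation is the same.
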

\begin{proof}
  Suppose if possible $\grade I = 0$. Then $U = H^0_I(A) \neq 0$. There exists $n > 0$ such that $I^nU = 0$.
  It follows that
  $$ \bigoplus_{n \geq 0}Ut^n \subseteq H^0_{\R_+}(\Sc). $$
  This contradicts the fact that $H^0_{\R_+}(\Sc)_n = 0$ for $n \gg 0$.
\end{proof}
We use induction on $d \geq 1$ to prove Lemma \ref{blow-up}. The following result helps in the base case $d = 1$.
\begin{proposition}[with hypotheses as in Lemma \ref{blow-up}]\label{dim1}
Assume further $(A,\m)$ is  a one dimensional \CM \ local ring. Then $H^1_{\R_+}(\Sc)_n \neq 0$ for infinitely many $n \geq 0$.
\end{proposition}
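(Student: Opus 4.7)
The plan is to reduce to the case of an infinite residue field, produce a principal reduction $y$ of $I$, and then use $y$ alone to compute $H^\bullet_{\R_+}(\Sc)$ via a two-term Cech complex. The upshot will be an explicit formula $H^1_{\R_+}(\Sc)_n = A_y/A$, nonzero for every $n \geq 0$.

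First I would dispense with the finite residue field case by a faithfully flat base change. Setting $B = A[X]_{\m A[X]}$, the ring $B$ is a one-dimensional \CM \ local ring with infinite residue field; graded base change \cite[4.3.2]{bs} gives $H^i_{\R'_+}(\Sc') \cong H^i_{\R_+}(\Sc) \otimes_A B$ for $\R' = B[IBt]$ and $\Sc' = B[t]$, and faithful flatness preserves nonvanishing of each graded piece. Hence I may assume $A/\m$ is infinite. A superficial element then produces $y \in I$ with $I^{n+1} = yI^n$ for all $n \geq r$; since $\grade I > 0$, the element $y$ is $A$-regular. The key auxiliary claim is that $(yt)\R$ and $\R_+$ have the same radical in $\R$: for any $x \in I$ one has $x^{r+1} \in I^{r+1} = yI^r$, so $(xt)^{r+1} \in (yt)\R$; since $\R_+$ is generated in degree one by $\{xt : x \in I\}$, this suffices.

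Granting this, $H^i_{\R_+}(\Sc) = H^i_{(yt)}(\Sc)$ is the cohomology of the two-term complex $0 \rt \Sc \rt \Sc_{yt} \rt 0$. Because $yt$ is $\Sc$-regular, $H^0$ vanishes and $H^1_{\R_+}(\Sc) = \Sc_{yt}/\Sc$. A degree-$n$ homogeneous element of $\Sc_{yt}$ has the form $at^m/(yt)^k = (a/y^k)t^{m-k}$ with $m - k = n$ and $m \geq 0$; taking $k$ sufficiently large shows $(\Sc_{yt})_n = A_y$ for every $n \in \Z$. Therefore $H^1_{\R_+}(\Sc)_n = A_y/A$ for every $n \geq 0$, and this is nonzero because $y \in \m$ is not a unit in $A$.

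The main obstacle I anticipate is the radical-equivalence claim $\sqrt{(yt)\R} = \sqrt{\R_+}$. It depends crucially on $(y)$ being an honest reduction of $I$ rather than merely a superficial element, which is why the initial reduction to an infinite residue field cannot be avoided; once that is in place the rest is a direct computation.
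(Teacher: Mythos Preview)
Your argument is correct, and it takes a genuinely different route from the paper's.

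The paper argues by contradiction through the module $\Lc^I = \bigoplus_{n\ge 0} A/I^{n+1}$: from the exact sequence $0 \to \R \to \Sc \to \Lc^I(-1) \to 0$ one gets $H^1_{\R_+}(\Lc^I)_n = 0$ for $n \gg 0$, then invokes the identification $H^i_{\R_+}(\Lc^I)\cong H^i_\M(\Lc^I)$ and the short exact sequence $0 \to G_I(A) \to \Lc^I \to \Lc^I(-1) \to 0$ to produce surjections $H^1_\M(\Lc^I)_n \twoheadrightarrow H^1_\M(\Lc^I)_{n-1}$; this forces $H^1_\M(\Lc^I)=0$ and then $H^1_\M(G_I(A))_n = 0$ for all $n \le 0$, contradicting a standard nonvanishing fact for the top local cohomology of a graded ring over an Artinian base.

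Your approach is more elementary and more explicit: after passing to infinite residue field you pick a principal reduction $y$ of $I$, observe that $\sqrt{(yt)\R}=\sqrt{\R_+}$, and compute $H^1_{\R_+}(\Sc)$ as the cokernel of $A[t]\to A[t]_{yt}\cong A_y[t,t^{-1}]$, giving $H^1_{\R_+}(\Sc)_n \cong A_y/A \neq 0$ for \emph{every} $n\ge 0$. This is sharper than the stated conclusion (infinitely many $n$) and avoids the $\Lc^I$ machinery and the cited nonvanishing result for $G_I(A)$. The cost is the extra base-change step to secure a principal reduction; the paper's argument, by contrast, never needs to enlarge the residue field, and it dovetails with the $\Lc^I$ framework used elsewhere in the paper.
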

\begin{proof}
  Suppose if possible $H^1_{\R_+}(\Sc)_n = 0$ for all $n \gg 0$. We have an exact sequence
  $$ 0 \rt \R \rt \Sc \rt \Lc^I(-1) \rt 0.$$
  So $H^1_{\R_+}(\Lc^I)_n = 0$ for all $\n \gg 0$. Let $\M$ be the maximal homogeneous ideal of $\R$. Then by \cite[3.1]{P2}  we have $H^i_{\R_+}(\Lc) \cong H^i_\M(\Lc)$ for all $i \geq 0$. So
  $H^1_\M(\Lc)_n = 0$ for all say for $n \geq r$.
  Set $G = G_I(A)$ the associated graded ring of $A$ with respect to $I$.
  We have an exact sequence $ 0 \rt G \rt \Lc^I \rt \Lc^I(-1) \rt 0$ of $\R$-modules. This yields a long exact sequence in cohomology
  \[
  H^0_\M(\Lc)_{n-1} \rt H^1_\M(G)_n \rt H^1_\M(\Lc^I)_n \rt H^1_\M(\Lc^I)_{n-1} \rt H^2_\M(G)_n = 0.
  \]
  So we have surjections $H^1_\M(\Lc^I)_n \rt H^1_\M(\Lc^I)_{n-1}$ for all $n \in \Z$. As $H^1_\M(\Lc)_n = 0$ for all $n \geq r$ it follows that infact  $H^1_\M(L)_n = 0$ for all $n \in \Z$.
  As $H^0_\M(\Lc)_n = 0$ for $n < 0$ it follows that $H^1_\M(G)_n = 0$ for $n \leq 0$. This is a contradiction, see \cite[17.1.10]{bs}.
  Thus $H^1_{\R_+}(\Sc)_n \neq 0$ for infinitely many $n \geq 0$.
\end{proof}
We now give
\begin{proof}[Proof of Lemma \ref{blow-up} when $\dim A > 0$]
We prove the result by induction on $d = \dim A$. We first consider the case when $d = 1$.  If $H^0_{\R_+}(\Sc)_n = 0$ for $n \gg 0$ then by \ref{pos} we have $\grade I > 0$. Thus $A$ is \CM. By \ref{dim1} we get $H^1_{\R_+}(\Sc)_n \neq 0$ for infinitely many $n \geq 0$. The result follows in this case.

Now assume $d \geq 2$ and assume the result holds for local rings of dimension $d -1$.
Assume if possible $H^i_{\R_+}(\Sc)_n = 0$ for all $n \gg 0$ and for all $i \geq 0$.
By \ref{pos} we have $\grade I > 0$. Let $x \in I$ be $A$-regular. Then $xt$ is $\Sc$-regular.
We have an exact sequence of $\R$-modules $$0 \rt \Sc(-1) \xrightarrow{xt} \Sc \rt \Sc/xt\Sc \rt 0.$$
It follows that $H^i_{\R_+}(\Sc/xt\Sc)_n = 0$ for all $n \gg 0$ and for all $i \geq 0$.
Set $B = A/(x), \R^\prime = B[IBt]$ and $\Sc^\prime = B[t]$. We have an exact sequence of $\R$-modules
$$ 0 \rt K \rt \Sc/xt \Sc \rt \Sc^\prime \rt 0 \quad \text{where } \ K = (x)t^0. $$
It follows that $K$ is $\R_+$-torsion. So  $H^i_{\R_+}(\Sc^\prime)_n = 0$ for all $n \gg 0$ and for all $i \geq 0$.
By graded independence theorem of local cohomology we have
$$H^i_{\R^\prime_+}(\Sc^\prime) \cong  H^i_{\R_+}(\Sc^\prime) \quad \text{ for all $i \geq 0$.} $$
So we have $H^i_{\R^\prime_+}(\Sc^\prime)_n = 0$ for all $n \gg 0$ and for all $i \geq 0$. This contradicts our induction hypothesis. The result follows.
\end{proof}

\section{Proof of Theorem \ref{main}}
In this section we give
\begin{proof}[Proof of Theorem \ref{main}]
By an exercise problem in \cite[4.5.12]{BH},  some Veronese of $\Sc$ is standard graded. Say $\Sc^{<l>} = A[Jt]$. Local cohomology commutes with the Veronese functor \cite[12.4.6]{bs}.  So we have
$$H^i_{\R^{<l>}_+}(\Sc^{<l>})_n = H^i_{\R_+}(\Sc)_{nl} = 0 \quad \text{for all $n \gg 0$ and for all $i \geq 0$}.$$
Furthermore $\R^{<l>} = A[I^lt]$. If we prove that $A[Jt]$ is a finite $A[I^l t]$ module (equivalently $J \subseteq \ov{I^l}$) then as $\Sc$ is a finite $A[Jt]$-module, it follows that $\Sc$ is a finite $\R$-module.  Thus it suffices to assume $\Sc = A[Jt]$ for some ideal $J$ in $A$.

Suppose if possible $J \nsubseteq \ov{I}$. Then $K = J \cap \ov{I}$ is a proper subset of $J$ containing $I$. Set $\T = A[Kt]$.  As $I$ is a reduction of $K$ we get $\sqrt{\R_+ \T} = \sqrt{\T_+}$.
It follows that $$H^i_{\R_+}(\Sc) = H^i_{\R_+ \Sc}(\Sc) = H^i_{\T_+}(\Sc) \quad \text{for all}\ i \geq 0. $$
So we may replace $K$ by $I$ and assume $\ov{I} \cap J = I$.

Let $P$ be a minimal prime of $J/I$. Then note that if $J_P \subseteq \ov{I_P}$ then
$$ J_P = (\ov{I_P})\cap J_P = (\ov{I})_P \cap J_P = I_P \quad \text{a contradiction.}$$
Note $J_P/I_P$ is a non-zero and of finite length. Furthermore $J_P \nsubseteq \ov{I_P}$. Set $B = A_P$, $\R^\prime = B[IBt]$ and $\Sc^\prime = B[Jt]$. Then we have
\begin{equation*}
  H^i_{\R^\prime_+}(\Sc^\prime)_n = 0 \quad \text{for all $n \gg 0$ and for all $i \geq 0$.} \tag{$\dagger$}
\end{equation*}

We have to consider two cases:\\
Case (1):  $J_P = B$. Then as $B_P/I_P$ has finite length we have $I_P$ is $PB$-primary. We also have $\Sc^\prime = B[t]$.
Then by Lemma \ref{blow-up}, $(\dagger)$ is NOT possible.

Case (2) $J_P$ is a proper ideal of $B$. Then as $\ell(J_P/I_P)$ is finite the condition $(\dagger)$ implies by Theorem \ref{ingredient} that $J_P \subseteq \ov{I_P}$, a contradiction.

Thus our assumption that $J  \nsubseteq \ov{I}$ is not possible. So $J \subseteq \ov{I}$.
\end{proof}

\end{document}